\newcounter{item}[section]
\newcounter{kirshr}
\newcounter{kirsha}
\newcounter{kirshb}
\newenvironment{enumroman}{\setcounter{kirshr}{1}
\begin{list}{(\roman{kirshr})}{\usecounter{kirshr}} }{\end{list}}
\newenvironment{enumarab}{\setcounter{kirshb}{1}
\begin{list}{(\arabic{kirshb})}{\usecounter{kirshb}} }{\end{list}}
\newtheorem{theorem}{Theorem}[section]
\theoremstyle{definition}
\newtheorem{definition}[theorem]{Definition}
\def\R{\mathbb{R}}
\def\C{{\mathfrak{C}}}
\def\Nr{{\mathfrak{Nr}}}
\def\Sg{{\mathfrak{Sg}}}
\def\RCA{{\sf RCA}}
\def\A{{\mathfrak{A}}}
\def\B{{\mathfrak{B}}}
\def\C{{\mathfrak{C}}}
\def\M{{\mathfrak{M}}}
\def\Bl{{\mathfrak{Bl}}}
\def\CA{{\sf CA}}
\def\SC{{\bf SC}}
\def\QEA{{\bf QEA}}
\def\K{{\bf K}}
\def\K{{\bf K}}
\def\RCA{{\sf RCA}}
\def\Rd{{\ Rd}}
\def\(R)RA{{\bf (R)RA}}
\def\R{\mathbb{R}}
\def\F{{\sf F}}
\def\c #1{{\cal #1}}
 \def\CA{{\sf CA}}
\def\B{{\sf B}}
\def\K{{\sf K}}
 \def\Cm{{\mathfrak{Cm}}}
\def\Nr{{\mathfrak{Nr}}}
\def\Nr{{\mathfrak{Nr}}}
\def\Tm{{\mathfrak{Tm}}}
\def\A{{\mathfrak{A}}}
\def\B{{\mathfrak{B}}}
\def\C{{\mathfrak{C}}}
\def\E{{\mathfrak{E}}}
\def\A{{\mathfrak{A}}}
\def\B{{\mathfrak{B}}}
\def\C{{\mathfrak{C}}}
\def\E{{\mathfrak{E}}}
\def\P{{\mathfrak{P}}}
\def\Bb{{\mathfrak{Bb}}}
\def\L{{\mathfrak{L}}}
\def\Bb{{\mathfrak{Bb}}}
\def\L{{\mathfrak{L}}}
\def\CA{{\sf CA}}
\def\RCA{{\sf RCA}}
\def\At{{\sf At}}
\def\R{{\sf R}}
\def\Rd{{\sf Rd}}
\def\pa{$\forall$}
\def\ws{winning strategy}
\def\QEA{{\sf QEA}}
\def\SC{{\sf SC}}
\def\Cof{{\sf Cof}}
\def\pe{$\exists$}
\def\Cof{\sf Cof}
\title{On neat atom structures for cylindric-like algebras}
\author{Tarek Sayed Ahmed }
\begin{document}
\maketitle

\begin{abstract} 
\begin{definition}
\begin{enumarab} 
\item Let $1\leq k\leq \omega$. Call an atom structure $\alpha$ weakly $k$ neat representable, 
the term algebra is in $\RCA_n\cap \Nr_n\CA_{n+k}$, but the complex algebra is not representable.
\item Call an atom structure neat if there is an atomic algebra $\A$, such that $\At\A=\alpha$, $\A\in \Nr_n\CA_{\omega}$ and for every algebra
$\B$ based on this atom structure there exists $k\in \omega$, $k\geq 1$, such that $\B\in \Nr_n\CA_{n+k}$.
\item Let $k\leq \omega$. Call an atom structure $\alpha$ $k$ complete, 
if there exists $\A$ such that $\At\A=\alpha$ and $\A\in S_c\Nr_n\CA_{n+k}$.
\item Let $k\leq \omega$. Call an atom structure $\alpha$ $k$ neat if there exists $\A$ such that $\At\A=\alpha$, 
and $\A\in \Nr_n\CA_{n+k}.$ 
\end{enumarab}
\end{definition}
\begin{definition} Let $\K\subseteq \CA_n$, and $\L$ be an extension of first order logic. We say that
$\K$ is well behaved w.r.t to $\L$, if for any $\A\in \K$, $\A$ atomic, and for any any atom structure $\beta$ such that $\At\A\cong \beta$,
for any $\B$, $\At\B=\beta$, $\B\in K$
\end{definition}
We investigate the existence of such structures, and the interconnections.
We also present several $\K$s and $\L$s as in the second definition. All our results extend to Pinter;s algebras and polyadic algebras with and without equality.
\end{abstract}

We prove:
\begin{theorem} 
\begin{enumarab}
\item There exists a countable weakly $k$ neat atom structure if and only if $k<\omega$
\item There exists an atom structure of a representable algebra that is not neat, this works for all dimensions.
\item  There exists an atom structure that is $n+2$ complete, that is elementary equivalent an an atom structure
that is m neat for all $m\in \omega$.
\item The class of completely representable algebras is not well behaved with repect to $L_{\omega,\omega}$ while the clas
of neat reducts is not well behaved with respect to $L_{\infty,\omega}$
\end{enumarab}
\end{theorem}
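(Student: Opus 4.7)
The four items are essentially independent, so I sketch an approach to each in turn.

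For item (1), the only-if direction is conceptual. If $\alpha$ is a countable atom structure with $\Tm\alpha \in \Nr_n\CA_\omega$, then by the Hirsch--Hodkinson theorem any countable atomic algebra in $\Nr_n\CA_\omega$ is completely representable; a complete representation of $\Tm\alpha$ is supremum-preserving on atoms, hence yields a representation of $\Cm\alpha$, contradicting the weak $\omega$-neatness of $\alpha$. For the if direction I would, for each finite $k\geq 1$, adapt a rainbow blow-up-and-blur construction to produce an atom structure $\alpha_k$ on which the existential player has a winning strategy in the atomic neat embedding game witnessing $\Tm\alpha_k \in \Nr_n\CA_{n+k}$, while a Ramsey/Lyndon-style pigeonhole argument on shades of colour rules out any representation of the complex algebra $\Cm\alpha_k$.

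For item (2), I would exhibit a countable atom structure $\alpha$ on which the term algebra is representable but some $\B$ with $\At\B = \alpha$ fails to be in $\Nr_n\CA_{n+k}$ for every $k$. The natural candidate for $\B$ is $\Cm\alpha$ itself: design the rainbow colouring so that every element of $\Tm\alpha$ uses only finitely many colours (whence it embeds into a representable algebra via a relativization argument), while $\Cm\alpha$, containing the infinitary joins over all colours, refutes the existential player's survival in the $k$-rounded neat embedding game for every finite $k$. For item (3), I would pick a rainbow atom structure $\alpha$ of finite width tuned so that some $\A$ with $\At\A = \alpha$ lies in $S_c\Nr_n\CA_{n+2}$ but not in $S_c\Nr_n\CA_{n+3}$. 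I then pass to a non-principal ultrapower of $\alpha$, which is $L_{\omega,\omega}$-elementarily equivalent to $\alpha$. Using the standard ultraproduct argument on games, together with the observation that the existential player's winning strategies in the finite-rounded games can be amalgamated coherently across the index set, the ultrapower algebra embeds into $\Nr_n\CA_\omega$; hence its atom structure is $m$-neat for every $m$.

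For item (4), I treat the two halves separately. For complete representability with respect to $L_{\omega,\omega}$, I appeal to the known fact that complete representability is not a first-order property: one builds an atomic algebra $\A$ that is completely representable alongside an elementarily equivalent atomic $\B$ based on an elementarily equivalent atom structure which is not, by arranging that the first-order theory of the atom structure does not detect the absence of a sufficiently saturated network. For $\Nr_n\CA_\omega$ with respect to $L_{\infty,\omega}$, I would invoke (a variant of) Hodkinson's result that $\Nr_n\CA_\omega$ is not closed under Dedekind--MacNeille completions: take $\A \in \Nr_n\CA_\omega$ whose completion $\overline{\A}$ is not in $\Nr_n\CA_{n+1}$; both algebras share the same atom structure, and this identity is a trivial $L_{\infty,\omega}$-equivalence, so well-behavedness fails at this level.

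The main obstacle is item (3): engineering a single atom structure that is sharp at level $n+2$, yet whose first-order theory is loose enough for some ultrapower to host a full $\Nr_n\CA_\omega$ algebra, requires careful balancing of the combinatorial parameters in the rainbow construction. The ultraproduct step on games also demands care, because the existential player's strategies in the finite-rounded games must be uniformly amalgamable through the ultrafilter, which is not automatic and is really where the first-order content of the construction must be extracted explicitly.
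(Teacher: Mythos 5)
Your treatment of item (3), the first half of item (4), and the only-if direction of item (1) is essentially the paper's: the paper also derives the only-if direction from the fact that a countable atomic algebra in $\Nr_n\CA_\omega$ is completely representable (so no weakly $\omega$-neat structure can exist), and it also obtains item (3) and the failure of well-behavedness for complete representability from a rainbow construction in which $\exists$ wins all finite-round games (giving, via a K\"onig-lemma/ultrapower argument, an elementarily equivalent algebra in $\Nr_n\CA_\omega$) while $\forall$ wins an $(n+2)$-pebble game excluding $S_c\Nr_n\CA_{n+2}$. For the if direction of (1), however, the paper does not use a rainbow construction: it blows up and blurs the finite Maddux relation algebras $\E_k(2,3)$, obtaining a term algebra $\Bb_n(\M,J,E)$ in $\RCA_n\cap\Nr_n\CA_{n+k}$ whose complex algebra is non-representable because the finite algebra $\M$ embeds into it and $\M$ admits only finite representations. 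Your ``rainbow blow-up with a Ramsey argument'' is plausible in spirit, but you still need to produce the $n$-dimensional cylindric basis and the $n$-blur $(J,E)$ that make the term algebra an actual neat reduct of a $\CA_{n+k}$; that is exactly the hard combinatorial content of the conditions $(J4)_n$ and $(J5)_n$, and it is missing from your sketch.

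The genuine gap is in item (2). You propose $\Cm\alpha$ as the witness that $\alpha$ is not neat, arguing from non-representability and the failure of $k$-round neat-embedding games. But non-representability of $\Cm\alpha$ does not put it outside $\Nr_n\CA_{n+k}$ for every finite $k\geq 1$ --- there are non-representable algebras in $\Nr_n\CA_{n+k}$ for finite $k$ --- and the games you invoke characterize complete subneat embeddings of the form $S_c\Nr_n\CA_{n+k}$, not membership in the full neat-reduct class $\Nr_n\CA_{n+k}$; excluding the weakest level $\Nr_n\CA_{n+1}$ is the crux and your sketch does not reach it. The paper instead works inside a concrete weak set algebra: $V$ is the set of finitely supported $\alpha$-tuples over a field of characteristic $0$, the full complex algebra $\wp(V)$ is itself in $\Nr_\alpha\CA_{\alpha+\omega}$, and the non-neat algebra is the subalgebra $\Sg^{\C}\{y,y_s:s\in y\}$ generated by $y=\{s\in V: s_0+1=\sum_{i>0}s_i\}$, which is shown not to lie even in $\Nr_\alpha\SC_{\alpha+1}$; this is what makes the result work for all dimensions, infinite included, as the theorem claims, whereas your rainbow approach is confined to finite $n$. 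A similar remark applies to the second half of item (4): relying on failure of closure under completions gives two algebras on literally the same atom structure, a legitimate but degenerate instance of $L_{\infty,\omega}$-equivalence which moreover rests on an input you have not supplied; the paper instead builds $\A\in\Nr_\alpha\CA_{\alpha+\omega}$ and an elementary subalgebra $\B$ whose atom structures are non-isomorphic (uncountably versus countably many atoms below certain $1_u$) yet $L_{\infty,\omega}$-equivalent, proving the equivalence by an Ehrenfeucht--Fra\"iss\'e back-and-forth system and a Boolean-valued extension in which the two algebras become isomorphic.
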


\begin{proof} 
\begin{enumarab}
\item We prove 3 and the first part of 4 together. 
We will not give the details, because the constructionwe use a rainbow construction for cylindric algebras,
is really involved nd it wil be submitted elsewhere,
however, we give the general idea.
We use essentially the techniques in \cite{r}, together with those in \cite{hh}, extending the rainbow construction
to cylindric algebra. But we mention a very important difference. 

In \cite{hh} one game is used to test complete representability.
In  \cite{r} {\it three} games were divised testing different neat embedability properties.
(An equivalence between complete representability and special neat embeddings is proved in \cite{IGPL})

Here we use only two games adapted to the $\CA$ case. This suffices for our purposes. 
The main result in \cite{hh}, namely, that the class of completely representable algebras of dimension
$n\geq 3$, is non elementary, follows from the fact that \pe\  cannot win the infinite length 
game, but he can win the finite ones. 

Indeed a very useful way of characterizing non elementary classes, 
say $\K$, is a Koning lemma argument. The idea is  to devise a game $G$ on atom structures such that for a given algebra atomic $\A$  \pe\ 
has a winning strategy on its atom structure for all games of finite length, 
but \pa\ wins the $\omega$ round game. It will follow that there a countable cylindric algebra $\A'$ such that $\A'\equiv\c
A$ and \pe\ has a \ws\ in $G(\A')$.
So $\A'\in K$.  But $\A\not\in K$
and $\A\preceq\A'$. Thus $K$ is not elementary.

To obtain our results we use {\it two} distinct games, both having $\omega$ rounds, played on a rainbow atom structure, the desired algebra is any algebra based on this atom structute it can be the term algebar generated by the atoms or the 
full complex algebra.  
Of course the games are very much related.

In this new context \pe\ can also win a finite game with $k$ rounds for every $k$. Here the game
used  is more complicated than that used in Hirsch and Hodkinson \cite{hh}, 
because in the former case we have three kinds of moves which makes it harder for \pe\ 
to win. 

Another difference is that the second game, call it $H$,  is actually  played on pairs, 
the first component is an atomic network (or coloured graph)  defined in the new context of cylindric 
algebras, the second is a set of hyperlabels, the finite sequences of nodes are labelled, 
some special ones are called short, and {\it neat} hypernetworks or hypergraphs are those that label short hyperedges with the same label. 
And indeed a \ws\ for \pe\ 
in the infinite games played on an atom structure forces that this is the atom structure of a neat reduct; 
in fact an algebra in $\Nr_n\CA_{\omega}$. However, unlike complete representability,
does not exclude the fact, in principal, there are other representable algebras 
having the same atom stucture can be only subneat reducts.

But \pe\ cannot win the infinite length game, it can only win the finite length games of length $k$ for every finite $k$.

On the other hand, \pa\  can win 
{\it another  pebble game}, also in $\omega$ rounds (like in \cite{hh} on a red clique), but
 there is a  finiteness condition involved in the latter, namely  is the number of nodes 'pebbles 'used, which is $k\geq n+2$, 
and  \pa\ s \ws\ excludes the neat embeddablity of the algebra in $k$ extra dimensions. This game will be denoted by $F^k$. 

This implies that $\A$ is elementary equivalent to a full neat reduct but it is not in 
$S_c\Nr_n\CA_{n+2}$.

And in fact the Hirsch Hodkinson's main result in \cite{r}, 
can be seen as a special case, of our construction. The game $F^k$, without the restriction on number
of pebbles used and possibly reused, namely $k$ (they have to be reused when $k$ is finite), but relaxing the condition of finitness,
\pa\ does not have to resuse node, and then this game  is identical to the game $H$ when we delete the  hyperlabels from the latter, 
and forget about the second and third 
kinds of move. So to test only complete representability, we use only these latter games, which become one, namely the one used 
by Hirsch and Hodkinson in \cite{hh}. 

In particular, our algebra $\A$ constructed is not completely representable, but is elementary equivalent to one that is.
This also implies that the class of completely representable atom structures are not elementary, the atom structure of the 
former two structures are elementary equivalent, one is completely representable, the other is not.
Since an atom structure of an algebra is first order interpretable in the algebra, hence, 
the latter also gives an example of an atom structure that is 
weakly representable but not strongly representable.

\item Now we prove 2, Let $k$ be a cardinal. Let $\E_k=\E_k(2,3)$ denote the relation algebra
which has $k$ non-identity atoms, in which $a_i\leq a_j;a_l$ if $|\{i,j,l\}|\in \{2,3\}$
for all non-identity atoms $a_i, a_j, a_k$.(This means that all triangles are allowed except the monochromatic ones.) 
These algebras were defined by Maddux.
Let $k$ be finite, let $I$ be the set of non-identity atoms of $\E_k(2,3)$ and let $P_0, P_1\ldots P_{k-1}$ be an enumeration of the elements of $I$.
Let $l\in \omega$, $l\geq 2$ and let $J_l$ denote the set of all subsets 
of $I$ of cardinality $l$. Define the symmetric ternary relation on $\omega$ by $E(i,j,k)$ if and only if $i,j,k$ are evenly distributed, that is
$$(\exists p,q,r)\{p,q,r\}=\{i,j,k\}, r-q=q-p.$$
Now assume that $n>2$, $l\geq 2n-1$, $k\geq (2n-1)l$, $k\in \omega$. Let $\M=\E_k(2,3).$
Then $\M$ is a simple, symmetric finite atomic relation algebra.
The idea is to blow up and blur $\M$. This is done by splitting each atom into infinitely countable many ones 
and using a finite set of blurs. So the underlying set of the new atom structure will be of the form
$\omega\times \At\M\times J$, $J$ is a set of finite blurs that corresponds to colours that in turn correpond to non principal 
ultarfilters, needed to represent the term algebra. This term algebra  which is blurred in the sense that $\M$ is not embeddable in it; 
but $\M$ will be embeddable in the full complex algebra the former can be only represented 
on finite sets, the later on infinite sets, if at all, hence
it cannot be represpentable. The idea used here is to define two partitions of the set $I\times \At\M\times J$, 
the first is used to embed $\M$ into the complex algebra, and the term algebra will be the second partition up to finite and 
cofinite deviations.

Now we have $$(\forall V_2\ldots, V_n, W_2\ldots W_n\in J_l)(\exists T\in J_l)(\forall 2\leq i\leq n)$$
$$(\forall a\in V_i)\forall b\in W_i)(\forall c\in T_i)(a\leq b;c).$$
That is $(J4)_n$ formulated in \cite{ANT} p. 72 is satisfied. Therefore, as proved in \cite{ANT} p. 77,
$B_n$ the set of all $n$ by $n$ basic matrices is a cylindric basis of dimension $n$.
But we also have $$(\forall P_2,\ldots ,P_n,Q_2\ldots Q_n\in I)(\forall W\in J_l)(W\cap P_2;Q_2\cap\ldots \cap P_n:Q_n\neq 0)$$
That is $(J5)_n$ formulated on p. 79 of \cite{ANT} holds. According to definition 3.1 (ii) $(J,E)$ is an $n$ blur for $\M$, 
and clearly $E$ is definable in $(\omega,<)$.
Let $\C$ be as defined in lemma 4.3 in \cite{ANT}.   
Then, by lemma 4.3, $\C$ is a subalgebra of $\Cm\B_n$, hence it contains the term algebra $\Tm\B_n$.
Denote $\C$ by $\Bb_n(\M, J, E)$. Then by theorem 4.6 in \cite{ANT} $\C$ is representable, and by theorem 4.4 in \cite{ANT} 
for $m<n$
$\Bb_m(\M,J,E)=\Nr_m\Bb_n(\M,J,E)$. However $\Cm\B_n$ is not representable.
In \cite{ANT} $\R=\Bb(\M,J,E)$ is proved to be generated by a single element. 
If $k=\omega$, then algebra in $\Nr_n\CA_{\omega}$ will be completely representable. If the term algebra is completely reprsentable, then the complex algebra will be 
representable.

\item Concerning that the class of strongly representable algebras, one uses an ultraproduct of what we call anti-Monk algebras.
If one increases the number of blurs in the above construction, then one gets a 
a sequence of non representable algebras, namely the complex algebras
based on the atom structure as defined above, with an increasing number of blurs. 
This corresponds to algebras based on graphs with increasing finite chromatic number ; 
the limit will be an algebra based on a graph of infinite chromatic number, hence
will be representable, in fact, completely representable. This for example proves Monk's classical non finite axiomatizability result.
A graph which has a finite colouring is called a bad graph by Hirsch and Hodkinson. A good graph is one which gives representable algebras,
it has infinite chromatic number. So the Monk theme is to construct algebra based on bad graphs that converge to one that is based
on  a good graph.
This theme is reversed  by used by what we call anti Monk algebras, that are based on Erdos graphs.
Every  graph in this sequence has infinite chromatic number and the limit  algebra based on the ultraproduct of these
graphs will be only  two colurable. This shows that the class of strongly atom structures is not elementary.

\item  Here the eaxmple works for all dimensions, infinite included.
Let $\alpha>1$ and $\F$ is field of characteristic $0$. 
Let 
$$V=\{s\in {}^{\alpha}\F: |\{i\in \alpha: s_i\neq 0\}|<\omega\},$$
Note that $V$is a vector space over the field $\F$. 
We will show that $V$ is a weakly neat atom structure that is not strongly neat.
Indeed $V$ is a concrete atom structure $\{s\}\equiv _i \{t\}$ if 
$s(j)=t(j)$ for all $j\neq i$, and
$\{s\}\equiv_{ij} \{t\}$ if $s\circ [i,j]=t$.

Let $\C$ be the full complex algebra of this atom structure, that is
$${\C}=(\wp(V),
\cup,\cap,\sim, \emptyset , V, {\sf c}_i,{\sf d}_{ij}, {\sf s}_{ij})_{i,j\in \alpha}.$$  
Then clearly $\wp(V)\in \Nr_{\alpha}\CA_{\alpha+\omega}$.
Indeed Let $W={}^{\alpha+\omega}\F^{(0)}$. Then
$\psi: \wp(V)\to \Nr_{\alpha}\wp(W)$ defined via
$$X\mapsto \{s\in W: s\upharpoonright \alpha\in X\}$$
is an isomomorphism from $\wp(V)$ to $\Nr_{\alpha}\wp(W)$.
We shall construct an algebra $\A$ such that $\At\A\cong V$ but $\A\notin \Nr_{\alpha}\CA_{\alpha+1}$.

Let $y$ denote the following $\alpha$-ary relation:
$$y=\{s\in V: s_0+1=\sum_{i>0} s_i\}.$$
Note that the sum on the right hand side is a finite one, since only 
finitely many of the $s_i$'s involved 
are non-zero. 
For each $s\in y$, we let 
$y_s$ be the singleton containing $s$, i.e. $y_s=\{s\}.$ 
Define 
${\A}\in \QEA_{\alpha}$ 
as follows:
$${\A}=\Sg^{\C}\{y,y_s:s\in y\}.$$
Then, 
$$\Rd_{ca}\A\notin \Nr_{\alpha}\SC_{\alpha+1}.$$ 
That is for no $\mathfrak{P}\in \SC_{\alpha+1}$, it is the case that $\Sg^{\C}X$ exhausts the set of all $\alpha$ dimensional elements 
of $\mathfrak{P}$. 

\item  $\R$ be an uncountable set and let $\Cof R$ be set of all non-empty finite or cofinite subsets  $R$.
Let $\alpha$ be any ordinal. For $k$ finite, $k\geq 1$, let
$$S(\alpha,k)=\{i\in {}^\alpha(\alpha+k)^{(Id)}: \alpha+k-1\in Rgi\},$$
$$\eta(X)=\bigvee \{C_r: r\in X\},$$
$$\eta(R\sim X)=\bigwedge\{\neg C_r: r\in X\}.$$


We give a construction for cylindric algebras for all dimensions $>1$.
Let $\alpha>1$ be any ordinal. $(W_i: i\in \alpha)$ be a disjoint family of sets each of cardinality $|\R|$.
Let $M$ be their disjoint union, that is
$M=\bigcup W_i$. Let $\sim$ be an equvalence relation on $M$ such that $a\sim b$ iff $a,b$ are in the same block.
Let $T=\prod W_i$. Let $s\in T$, and let $V={}^{\alpha}M^{(s)}$. 
For $s\in V$, we write $D(s)$ if $s_i\in W_i$, and we let $\C=\wp(V)$.
There are $\alpha$-ary relations $C_r\subseteq {}^{\alpha}M^{(s)}$ on the base $M$ for all $r\in \R$,
such that conditions (i)-(v) below hold:
\begin{enumroman}
\item $\forall s(s\in C_r\implies D(s))$

\item For all $f\in {}^{\alpha}W^{(s)}$ for all $r\in \R$, for all permutations
$\pi\in ^{\alpha}\alpha^{(Id)}$, if $f\in C_r$ then $f\circ \pi\in C_r.$ 

\item For all $1\leq k<\omega$, for all 
$v\in {}^{\alpha+k-1}W^{(s)}$ one to one,  for all $x\in W$, $x\in W_m$ say, then for any
function $g:S(\alpha,k)\to \Cof\R$ 
for which $\{i\in S(\alpha,k):|\{g(i)\neq \R\}|<\omega\}$, 
there is a $v_{\alpha+k-1}\in W_m\smallsetminus Rgv$ such that 
and  
$$\bigwedge \{D(v_{i_j})_{j<\alpha}\implies \eta(g(i))[\langle v_{i_j}\rangle]: 
i\in S(\alpha,k)\}.$$
\item The $C_r$'s are pairwise disjoint.
\end{enumroman}

For $u\in S_{\alpha}$ and $r\in R$, let 
$$p(u,r)=C_r\cap (W_{u_0}\times W_{u_1}\times W_{u_i}\times)\cap {}^{\alpha}W^{(s)}.$$ 
Let $$\A=\Sg^{\C}\{p(u,r): u\in S_{\alpha}: r\in \R\}.$$
For $u\in {}^{\alpha}\alpha^{(Id)}$, let $1_u=W_{u_0}\times W_{u_i}\times \cap V$
and $\A_u$ denote the relativisation of $\A$ to $1_u$.
i.e $$\A_u=\{x\in A: x\leq 1_u\}.$$ 
$\A_u$ is a boolean algebra. Also  $\A_u$ is uncountable and {\it atomic} for every $u\in V$
elements of $\A_u$.  
Because of the saturation condition above, we have $\A\in \Nr_{\alpha}\CA_{\alpha+\omega}$.
Define as above  map $f: \Bl\A\to \prod_{u\in {}V}\A_u$, by
$$f(a)=\langle a\cdot \chi_u\rangle_{u\in{}V}.$$
Let $\L$ be the quantifier free reduct of $L_{\infty, \omega}$ with infinite conjunctions, and quantifier free reduct of $L_{\omega,\omega}$
for finite dimensions.
We will expand the language of the boolean algebra $\prod_{u\in V}\A_u$ by constants in 
such a way that
$\A$ becomes $\L$ interpretable in the expanded structure.
We shall give more details here, because the meta-logic is infinitary.
As before $\P$ denote the 
following structure for the signature of boolean algebras expanded
by constant symbols $1_u$ for $u\in {}V$ and ${\sf d}_{ij}$ for $i,j\in \alpha$: 
\begin{itemize}
\item The Boolean part of $\P$ is the boolean algebra $\prod_{u\in {}V}\A_u$,

\item $1_u^{\P}=f(\chi_u^{\M})=\langle 0,\cdots0,1,0,\cdots\rangle$ 
(with the $1$ in the $u^{th}$ place)
for each $u\in {}V$,

\item ${\sf d}_{ij}^{\P}=f({\sf d}_{ij}^{\A})$ for $i,j<\alpha$.
\end{itemize}

Define a map $f: \Bl\A\to \prod_{u\in {}V}\A_u$, by
$$f(a)=\langle a\cdot \chi_u\rangle_{u\in{}V}.$$

We now show that $\A$ is $\L$ interpretable in $\P.$  
For this it is enough to show that 
$f$ is one to one and that $Rng(f)$ 
(Range of $f$) and the $f$-images of the graphs of the cylindric algebra functions in $\A$ 
are definable in $\P$. Since the $\chi_u^{\M}$ partition 
the unit of $\A$,  each $a\in A$ has a unique expression in the form
$\sum_{u\in {}V}(a\cdot \chi_u^{\M}),$ and it follows that 
$f$ is boolean isomorphism: $bool(\A)\to \prod_{u\in {}V}\A_u.$
So the $f$-images of the graphs of the boolean functions on
$\A$ are trivially definable. 
$f$ is bijective so $Rng(f)$ is 
definable, by $x=x$. For the diagonals, $f({\sf d}_{ij}^{\A})$ is definable by $x={\sf d}_{ij}$.

Finally we consider cylindrifications for $i<\alpha$. Let $S\subseteq {}V$ and  $i,j<\alpha$, 
let $t_S$ and $h_S$ be the closed infinitary terms:
$$\sum\{1_v: v\in {}V, v\equiv_i u\text { for some } u\in S\}.$$
 
Let
$$\eta_i(x,y)=\bigwedge_{S\subseteq {}V}(\bigwedge_{u\in S} x.1_u\neq 0\land 
\bigwedge_{u\in {}V\smallsetminus S}x.1_u=0\longrightarrow y=t_S).$$
These are well defined.
Then it can be proved that for all $a\in A$, $b\in P$, we have 
$$\P\models \eta_i(f(a),b)\text { iff } b=f({\sf c}_i^{\A}a).$$
(For finite dimensions all this can be implemented in the quantifier free reduct of first order logic, we do not need infinite conjunctions).

Now we can deduce that there is an algebra $\P$ that is $\L$ interpretable in $\A$, with a complete elementary subalgebra that is not a neat reduct.
For finite dimensions, $\L$ is just the quantifier free reduct of first order logic, and we obtain our previous results for cylindric algebras, 
with the additional condition that our algebras 
are atomic. 

If we take $\B$ to be $\Sg^{\A}\{p(u,r): r\in \R, u\neq Id\cup p(u,r): r\in N\},$ then $\B$ is an elementary complete subalgebra of $\A$.
This works for all dimensions, and basically follows from that fact that $\A$ has a very rich group of automorphisms, 
every permutation of $P(Id)=\{p(Id,r): r\in \R\}$ induces one that is the identity 
on $P\sim P(Id)$.

\item Here we play a game between $\At\A$ and $\At\B$ that will show that they are $L_{\infty,\omega}$ equivalent, then so are $\A$ and $\B$,
because the atom structure of an algebra is interpretable in the algebra, we can play the games on the atoms
of the algebra.
For this purpose, we devise a game between $\forall$ and $\exists$.
The game is played in $\mu\leq \omega$
steps. At the $i$th step of a play, player $\forall$ takes one of the structures $\A$, $\B$ 
and 
chooses an atom of this structure; then $\exists$ chooses 
an atom of the other structure. So between them they choose an atom $a_i$ of $\A$ 
and an atom 
$b_i$ of $\B$. 
the play  sequences $\bar{a}=(a_i:i<\mu)$ and $\bar{b}=(b_i: i<\mu)$ have been chosen.
The pair $(\bar{a}, \bar{b})$ is known as the play.
We count the play $(\bar{a}, \bar{b})$ as a win for player $\exists$, 
and we say that $\exists$ wins the play, if there is an isomorphism
$f:\Sg^{\A}ran({\bar{a}})\to \Sg^{\B}ran({\bar {b}})$ such that $f\bar{a}=\bar{b}.$
Let us denote this game by $EF_{\mu}(\A,\B).$ (It is an instance of an 
Ehrenfeuch-Fraisse pebble game)
Two atomic structures $\A$ and $\B$ are back and forth equivalent 
if $\exists$ has a winning strategy 
for the game $EFA_{\omega}(\A,\B)$.  
For $u\in S_n$, let $$1_u=W_{u_0}\times W_{u_1}\ldots \times W_{u_{n-1}}.$$
then $\{1_u: u\in S_n\}$ forms a partition of the unit
$^nW$ of $\A(n)$. 
It is easy to see that $1_u\in \A(n)\cap \B(n)$. Let $\A_u=\{x\in \A(n): x\leq 1_u\}$
and $\B_u=\{x\in \B(n): x\leq 1_u\}$. Then $\A_u$ and $\B_u$ are atomic boolean algebras.
The set of atom of $A_u$ is 
$P(u)=\{p(u,r):r\in R\}$ while that of $\B_u$ is $P(u)$ if $u\notin T_n$
and $P_{\omega}(u)$ otherwise.  
For all nonzero $a\in Rl_{1_u}A(n)$, for all $i<n$, ${\sf c}_ia={\sf c}_i1_u$.
For all $a\in A$, for all $i<n$, ${\sf c}_ia\cap 1_{u}\in \{0,1_{u}\}$.
Hence both $\A$ and $\B$ are atomic . So $\A$ and $\B$
are identical in all components except for the components ''coloured " 
by $1_{u}$, $u\in T_n$ beneath which $\A$ has 
uncountably many atoms and $\B$ has countably many atoms.
Now for the game. 
At each step, if the play so far $(\bar{a}, \bar{b})$ and $\forall$ chooses an atom $a$ 
in one of the substructures, we have one of two case. 
Either $a.1_u=a$ for some $u\notin T_n$
in which case
$\exists$ chooses the same atom in the other structure. 
Else $a\leq 1_{u}$ for some $u\in T_n.$ 
Then
$\exists$ chooses a new atom below $1_{u}$ 
(distinct from $a$ and all atoms played so far.)
This is possible since there finitely many atoms in 
play and there are infinitely many atoms below
$1_{u}$.
This strategy makes $\exists$ win. 
Let $J$ be the corresponding  back and forth system.   
Order $J$ by reverse inclusion, that is $f\leq g$ 
if $f$ extends $g$. $\leq$ is a partial order on $J$.
For $g\in J$, let $[g]=\{f\in J: f\leq g\}$. Then $\{[g]: g\in J\}$ is the base of a 
topology on 
$J.$ Let $\C$ be the complete  
Boolean algebra of regular open subsets of $J$ with respect to the topology 
defined on $J.$
Form the boolean extension $\M^{\C}.$
Then $G$ is an isomorphism in $\M^{\C}$ of $\breve{\A}$ to 
$\breve{\B}.$
We shall use the following for $s\in \M^{\C}$, (1):
$$||(\exists x\in \breve{s})\phi(x)||=\sum_{a\in s}||\phi(\breve{a})||.$$
Define $G$ by
$||G(\breve{a},\breve{b})||=\{f\in {J}: f(a)=b\}$. 
for $c\in A$ and $d\in B$. It can be checked that $G$ is a well defined isomorphism.
Since $\A$ and $\B$ are isomorphic in a Boolean extension of the universe of sets, then they are $L_{\infty, \omega}$ equivalent.

Another way is to form a boolean extension $\M^*$ of $\M$
in which the cardinalities of $\A$ and $\B$ collapse to 
$\omega$.  Then $\A$ and $\B$ are still back and forth equivalent in $\M^*.$
Then $\A\equiv_{\infty\omega}\B$ in $\M^*$, and hence also in $\M$
by absoluteness of $\models$.

\end{enumarab}

\end{proof}

\end{document}